\theoremstyle{thmstyleone}%
\newtheorem{theorem}{Theorem}
\newtheorem{problem}{Problem}[section]
\newtheorem{proposition}[theorem]{Proposition}%
\theoremstyle{thmstyletwo}%
\newtheorem{example}{Example}%
\theoremstyle{thmstylethree}%
\begin{document}

\title[Forbidden Subgraphs of Co-primes Graphs of Finite Groups]{Forbidden Subgraphs of Co-primes Graphs of Finite Groups}

\author*[1]{\fnm{Swathi} \sur{V V}}\email{swathi\_p180061ma@nitc.ac.in}

\author[1]{\fnm{Sunitha} \sur{M S}}\email{sunitha@nitc.ac.in}
\affil*[1]{\orgdiv{Department of Mathematics}, \orgname{National Institute of Technology Calicut}, \orgaddress{ \city{Calicut}, \postcode{673601}, \state{Kerala}, \country{India}}}

\abstract{For a finite group $G$ the co-prime graph $\Gamma(G)$ is defined as a graph with vertex set $G$ in which two distinct vertices $x$ and $y$ are adjacent if and only if $gcd(o(x),o(y))=1$ where $o(x)$ and $o(y)$ denote the orders of the elements $x$ and $y$ respectively. In this paper we find properties of groups whose co-prime graphs forbid graphs such as $C_4,K_{1,3},P_4$ and asteroidal triples.\\
  }

  \keywords{Co-prime graphs,  Forbidden subgraphs, Cographs, Claw-free, Split graphs, Asteroidal triples.}

\pacs[MSC Classification]{05C25}

\maketitle

\section{Introduction}\label{sec1}
For a finite group $G$, several graphs can be defined using different group properties. Power graphs, Enhanced power graphs, commuting graphs, Intersection graphs and co-primes graphs are some of the examples of graphs defined on groups. There are many useful applications for graphs defined on groups and are related to automata theory \cite{kelarev2003graph}. 

Ma et.al \cite{ma2014coprime} introduced the co-prime graph $\Gamma(G)$ of a finite group $G$ in the year 2014. They defined the co-prime graph as a simple undirected graph with vertex set $G$ and the edge set consists of the unordered pair of vertices $(x,y)$ such that $gcd(o(x),o(y))=1$, where $o(x)$ and $o(y)$ denote the orders of the elements $x$ and $y$ respectively. The authors studied some graph theoretic properties such as diameter, clique number, planarity and automorphism group of co-prime graphs. The number of edges in co-prime graphs of cyclic groups and dihedral groups are computed in \cite{shelash2021co}. In \cite{dorbidi2016note} the authors proved that the clique number and chromatic number of co-prime are same for any finite group $G$. Also they classified all finite groups whose co-prime graphs are complete $r$-partite or planar. The authors in \cite{juliana2020coprime} studied co-prime graphs of cyclic groups. The clique numbers and chromatic numbers of co-prime graphs of dihedral groups are determined in \cite{syarifudin2021clique}. The co-prime graph of generalized quaternion groups are studied in \cite{nurhabibah2021some}. Refer \cite{hao2022notes} and \cite{saini2021co} for some related graphs.

 A forbidden graph characterization can be used to describe numerous significant families of graphs. Graph theorists have been searching for such characterization since in 1930, Kuratowski characterized planar graphs as those graphs which does not contain induced subgraphs isomorphic to $K_5$ or $K_{3,3}$. Cographs and split graphs are some of the important graph classes which can be defined using forbidden induced subgraphs. Forbidden subgraphs of power graphs of groups have been
studied by Doostabadi et al. \cite{doostabadi2014power} and Cameron et al. \cite{manna2020forbidden}. 

In this paper we classify finite groups whose co-prime graphs are AT-free, $C_4$-free, Claw-free, co-graphs and split-graphs.

\section{Preliminaries}

A graph $\Gamma$ is a pair $(V,E)$ of two sets, where $V$ is a finite nonempty set of objects called vertices and $E$ is a set of 2-element subsets of $V$ called edges. Two vertices $x$ and $y$ are said to be adjacent in $\Gamma$ if $\{x,y\}$ is an edge of $\Gamma$. We denote $x\sim y$ if the vertices $x$ and $y$ are adjacent in $\Gamma$. The degree of a vertex $x$ is the number of vertices adjacent to $x$, which is denoted by $deg(x)$. The set of all vertices adjacent to $x$ is called the neighborhood of $x$, which is denoted by $N(x)$.

Let $\mathcal{G}$ be a family of graphs. $\mathcal{G}$ is called $H$-free if no $G\in \mathcal{G}$ contains $H$ as an induced subgraph, also $H$ is called the forbidden subgraph of $\mathcal{G}$.
 A graph is called a co-graph if it is $P_4$-free. A split graph is a graph in which the vertices can be partitioned into an independent set  and a clique. A graph is said to be Claw-free if it is $K_{1,3}$-free. An Asteroidal Triple or briefly AT in a graph is an independent set of three vertices such that there is a path between each pair of those three vertices which does not contain any neighbours of the third. The graph given in Figure 1 is a cograph and a split graph that is not claw-free and not AT-free. The subgraph induced by the vertices $u,v,w$ and $r$ is a claw, and the vertices $u,w$ and $z$ form an AT.

 The order of a group $G$ is the number of elements in $G$, which is denoted by $\lvert G\rvert $. The identity element in $G$ is denoted by $e$. The order of an element $g\in G$ is the smallest positive integer $n$ such that $g^n=e$, which is denoted by $o(g)$.
We denote set of all prime divisors of $\lvert G\rvert$ by $\pi(G)$ and that of $o(g)$ by $\pi(g)$. The center of $G$ is the set of all elements in $G$ which commute with every other element in $G$, and is denoted $Z(G)$. An EPPO-group is a group in which every element has some prime power order.

\begin{picture}(100,200)(-100,-70)
\put(0,0){\circle*{5.5}}
\put(0,50){\circle*{5.5}}
\put(0,100){\circle*{5.5}}
\put(50,0){\circle*{5.5}}
\put(100,0){\circle*{5.5}}
\put(50,100){\circle*{5.5}}
\put(100,100){\circle*{5.5}}
\put(100,50){\circle*{5.5}}
\put(0,0){\line(0,0){100}}
\put(0,0){\line(1,0){100}}
\put(0,100){\line(1,0){100}}
\put(100,0){\line(0,0){100}}
\put(50,0){\line(0,0){100}}
\put(0,50){\line(1,0){100}}
\put(0,50){\line(1,-1){50}}
\put(50,100){\line(1,-1){50}}
\put(0,50){\line(1,1){50}}
\put(50,0){\line(1,1){50}}
\put(0,-10){\makebox(0,0)[cc]{$z$}}
\put(50,-10){\makebox(0,0)[cc]{$r$}}
\put(100,-10){\makebox(0,0)[cc]{$s$}}
\put(0,110){\makebox(0,0)[cc]{$u$}}
\put(50,110){\makebox(0,0)[cc]{$v$}}
\put(100,110){\makebox(0,0)[cc]{$w$}}
\put(-10,50){\makebox(0,0)[cc]{$x$}}
\put(110,50){\makebox(0,0)[cc]{$y$}}
\put(50,-40){\makebox(0,0)[cc]{Figure 1}}

\end{picture}

\section{Forbidden subgraphs}

In this section we characterize all groups whose co-prime graphs are $C_4$-free, claw-free, cographs and split graphs. We begin with a proposition which we use throughout this paper.

\begin{proposition}
Let $g_1$ and $g_2$ be two elements of a group $G$. Then the following holds,
\renewcommand{\labelenumi}{(\alph{enumi})}
\begin{enumerate}
    \item $g_1\sim g_2$ in $\Gamma(G)$ if and only if $\pi(g_1)\cap \pi(g_2)=\phi$.
    \item $\pi(g_1)\subseteq \pi(g_2)$ if and only if $N(g_2)\subseteq N(g_1)$.
\end{enumerate}

\end{proposition}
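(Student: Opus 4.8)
The plan is to read part (a) straight off the definition of $\Gamma(G)$ and then to derive both implications of part (b) from it, the only substantial ingredient being a short group-theoretic construction. For (a) I would simply unpack the adjacency rule: for distinct $g_1,g_2$ we have $g_1\sim g_2$ iff $\gcd(o(g_1),o(g_2))=1$, and the elementary fact that two positive integers are coprime exactly when they share no prime divisor turns this into $\pi(g_1)\cap\pi(g_2)=\phi$. The identity $e$ causes no trouble here, since $\pi(e)=\phi$ and $e$ is adjacent to every other vertex; no further machinery is needed.

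For (b) I would first record the harmless caveat that $g_1$ and $g_2$ are understood to be non-identity: if $g_1=e\neq g_2$ then $\pi(e)=\phi\subseteq\pi(g_2)$ holds while $N(g_2)\subseteq N(e)$ fails, since the universal vertex $e$ lies in $N(g_2)\setminus N(e)$, so this degenerate case must be excluded. Granting this, the forward implication is pure logic resting on (a): assuming $\pi(g_1)\subseteq\pi(g_2)$ and taking any $h\in N(g_2)$, part (a) gives $\pi(h)\cap\pi(g_2)=\phi$, hence $\pi(h)\cap\pi(g_1)=\phi$, so $h\in N(g_1)$ by (a) again; thus $N(g_2)\subseteq N(g_1)$.

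The converse is the only step with real content, and I would prove it by contraposition using an explicit witness. Suppose $\pi(g_1)\not\subseteq\pi(g_2)$ and fix a prime $p\in\pi(g_1)\setminus\pi(g_2)$. Since $p\mid o(g_1)$, the power $h=g_1^{\,o(g_1)/p}$ has order exactly $p$, so $\pi(h)=\{p\}$. As $p\notin\pi(g_2)$ we get $\pi(h)\cap\pi(g_2)=\phi$ and $h\neq g_2$ (otherwise $p\in\pi(g_2)$), so $h\in N(g_2)$; and since $p\in\pi(g_1)$ we have $\pi(h)\cap\pi(g_1)=\{p\}\neq\phi$, so $h\notin N(g_1)$. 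Thus $h\in N(g_2)\setminus N(g_1)$, giving $N(g_2)\not\subseteq N(g_1)$, which is exactly the contrapositive. The main obstacle is precisely this construction: one must secure an element of prime order $p$ inside $\langle g_1\rangle$ (the displayed power, equivalently a mild appeal to Cauchy's theorem) and verify the distinctness conditions that make it a genuine neighbour; beyond that, everything is a direct translation through part (a).
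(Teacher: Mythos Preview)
Your proposal is correct and follows essentially the same line as the paper: part (a) by definition, the forward half of (b) by the obvious inclusion argument via (a), and the converse by exhibiting an element of prime order $p\in\pi(g_1)\setminus\pi(g_2)$ as a witness in $N(g_2)\setminus N(g_1)$. You are in fact a bit more careful than the paper---you construct the witness explicitly as $g_1^{\,o(g_1)/p}$ rather than merely asserting the existence of some $x$ of order $p$, and you flag the degenerate case $g_1=e$, which the paper passes over in silence.
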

\begin{proof}
(a) The statement holds from the definition of the co-prime graph.\\
(b) Suppose $\pi(g_1)\subseteq \pi(g_2)$. If $x\in N(g_2)$ then $\pi(g_2)\cap \pi(x)=\phi$, which implies $\pi(g_1)\cap \pi(x)=\phi$, and hence $x\in N(g_1)$.

Conversely suppose $N(g_2)\subseteq N(g_1)$ and let $p\in \pi(g_1)$. If $p\notin \pi(g_2)$, then $g_2\sim x$ for some $x\in G$ with $o(x)=p$. Since $N(g_2)\subseteq N(g_1)$, $x\in N(g_1)$, which is not possible.
\end{proof}

Next we prove that co-prime graphs of finite groups form a universal graph, which gives some context to the results about groups defined by forbidden subgraphs.
\begin{theorem}
Every finite graph is an induced subgraph of the co-prime graph of some group.
\end{theorem}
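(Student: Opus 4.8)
The plan is to exploit part (a) of the Proposition, which translates adjacency in $\Gamma(G)$ into disjointness of prime sets: $g_1\sim g_2$ exactly when $\pi(g_1)\cap\pi(g_2)=\phi$. So, given an arbitrary finite graph $\Gamma=(V,E)$, I would aim to assign to each vertex $v\in V$ a finite set of primes $S_v$ such that $S_v\cap S_w=\phi$ precisely when $vw\in E$, and then realise each $S_v$ as the prime set $\pi(g_v)$ of an element $g_v$ of a suitable group. Since disjointness must correspond to adjacency, the sets have to intersect exactly on the \emph{non}-edges of $\Gamma$; that is, they should form an intersection representation of the complement $\overline{\Gamma}$.

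First I would build such a representation. Choose pairwise distinct primes --- a private prime $q_v$ for each vertex $v\in V$, together with a prime $p_f$ for each edge $f$ of $\overline{\Gamma}$; this is possible because there are only finitely many of them and infinitely many primes. Set $S_v=\{q_v\}\cup\{p_f : f\in E(\overline{\Gamma}),\, v\in f\}$. Because the only edge of $\overline{\Gamma}$ that can contain both $v$ and $w$ is $\{v,w\}$ itself, one checks directly that $S_v\cap S_w=\{p_{\{v,w\}}\}$ when $vw\in E(\overline{\Gamma})$ and $S_v\cap S_w=\phi$ otherwise, while the private primes make all the $S_v$ nonempty and pairwise distinct. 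Hence $S_v\cap S_w=\phi$ if and only if $vw\notin E(\overline{\Gamma})$, i.e.\ if and only if $vw\in E$.

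Next I would realise these sets inside a group. Let $N$ be the product of all the chosen primes and take $G$ to be the cyclic group of order $N$. For each $v$ the squarefree integer $\prod_{p\in S_v}p$ divides $N$, so $G$ contains an element $g_v$ of exactly that order, whence $\pi(g_v)=S_v$. By unique factorisation, distinct subsets of a set of distinct primes have distinct products, so the orders $o(g_v)$ are pairwise distinct and therefore the $g_v$ are distinct vertices of $\Gamma(G)$. Finally, Proposition (a) gives $g_v\sim g_w$ in $\Gamma(G)$ iff $\pi(g_v)\cap\pi(g_w)=S_v\cap S_w=\phi$ iff $vw\in E$, so the subgraph of $\Gamma(G)$ induced on $\{g_v : v\in V\}$ is isomorphic to $\Gamma$.

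The construction is routine once the intersection-representation idea is in place; the only points requiring genuine care are conceptual rather than computational. One must remember to represent the complement $\overline{\Gamma}$ rather than $\Gamma$ itself, since co-prime adjacency corresponds to disjointness, and one must include the private primes $q_v$ to keep the chosen elements distinct --- without them, two vertices joined only to each other in $\overline{\Gamma}$ would receive equal prime sets and hence collapse to a single group element. Beyond this bookkeeping I expect no serious obstacle.
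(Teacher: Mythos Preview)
Your argument is correct and follows essentially the same strategy as the paper: encode each non-edge of $\Gamma$ by a distinct prime, realise the resulting prime sets as orders of elements in a squarefree cyclic group, and invoke Proposition~1(a). The only difference is cosmetic---you attach a private prime $q_v$ to \emph{every} vertex to guarantee that the sets $S_v$ (and hence the elements $g_v$) are pairwise distinct, whereas the paper reserves extra primes only for the vertices of full degree $n-1$; your uniform treatment is slightly cleaner and sidesteps any bookkeeping about when two vertices might otherwise receive identical prime sets.
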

\begin{proof}
Let $H=(V,E)$ be a graph with $\lvert V\rvert=n$ and $\lvert E\rvert=m$. \\
Supppose $W=\{u_1,u_2,...,u_r\}$ be the set of all vertices in $H$ of degree $n-1$ each.\\
Let $s=\frac{n(n-1)}{2}-m$ and $l=s+r$.\\ 
\underline{\textit{Claim}}: $H$ is an induced subgraph of $\Gamma(\mathbb{Z}_k)$ if $\lvert \pi(\mathbb{Z}_k)\rvert=l$. \\
Let $A=\{\alpha_1,\alpha_2,...,\alpha_s\}$ be the set of all 2-element subsets of $V$ which are not edges of $H$.\\
Let $P=\{p_1,p_2,...,p_s\}$ be the set of first $s$ primes and $Q=\{p_{s+1},p_{s+2},...,p_{s+r}\}$ be the set of next $r$ primes.\\
Define bijections $f:A\longrightarrow P$ and $g:W\longrightarrow Q$ by $f(\alpha_i)=p_i$ and $g(u_j)=p_{s+j}$ respectively.\\
Now, for each $v\in V\setminus W$, let $P_v=\{f(\alpha_i): v\in \alpha_i\}$.\\
Consider the cyclic group $\mathbb{Z}_k$ such that $k=p_1p_2...p_sp_{s+1}...p_{s+r}$.\\
For each $v\in V\setminus W$, select an element $x_v$ from $\mathbb{Z}_k$, distinct for distinct $v$, such that
$o(x_v)=\prod_{p_i\in P_v}^{}p_i$, and for each $u_j\in W$, select $x_{u_j}$ from $\mathbb{Z}_k$, distinct for distinct $u_j$ such that $o(x_{u_j})=p_{s+j}$.\\
Let $U_1=\{x_v: v\in V\setminus W\}$ and $U_2=\{x_{u_j}:u_j\in W\}$.\\
We prove that subgraph induced by the vertices in $U_1\cup U_2$ is isomorphic to $H$. Define $h:V\longrightarrow U_1\cup U_2$ by $h(v)=x_v$ if $v\in V\setminus W$ and $h(u_j)=x_{u_j}$ if $u_j\in W$. If $a$ and $b$ are two non-adjacent vertices in $G$ then $\{a,b\}=\alpha_i$ for some $\alpha_i\in A$, which implies $p_i\in P_a$ and $p_i\in P_b$, therefore, $p_i\mid o(x_a)$ and $p_i\mid o(x_b)$. Hence $x_a$ and $x_b$ are not adjacent in $\Gamma(U_1\cup U_2)$ and hence $h$ is an isomorphism.

\end{proof}

The following theorem gives a characterization to the co-prime graphs which are $C_4$-free.

\begin{theorem}
Let $G$ be a finite group. $\Gamma(G)$ is $C_4$-free if and only if $G$ is a nilpotent group of order $p^n$ or $2p^n$, $p$ is a prime.
\end{theorem}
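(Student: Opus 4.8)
The plan is to translate the adjacency relation into the language of prime sets via part (a) of the Proposition: $g_1 \sim g_2$ exactly when $\pi(g_1) \cap \pi(g_2) = \emptyset$. The single most useful observation is that a $C_4$ arises the moment $G$ contains two distinct elements of order $p$ together with two distinct elements of order $q$ for two different primes $p \neq q$. Writing these as $a_1, a_2$ (order $p$) and $b_1, b_2$ (order $q$), the four-cycle $a_1 - b_1 - a_2 - b_2 - a_1$ is induced, since consecutive vertices have disjoint singleton prime sets while the two diagonal pairs share a prime. I would record this as the running tool for producing forbidden $C_4$'s.

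For the $(\Leftarrow)$ direction I would compute $\Gamma(G)$ directly. If $\lvert G\rvert = p^n$ every non-identity element has $\pi(g) = \{p\}$, so no two non-identity vertices are adjacent and $\Gamma(G)$ is the star $K_{1,\,p^n-1}$, which contains no $C_4$. If $G$ is nilpotent of order $2p^n$ with $p$ odd, then $G \cong \mathbb{Z}_2 \times P$ with $P$ a $p$-group, and the only possible prime sets are $\emptyset, \{2\}, \{p\}, \{2,p\}$; there is a unique involution, the $\{p\}$-vertices and the $\{2,p\}$-vertices are each internally independent, and the only edges among non-identity vertices join the single $\{2\}$-vertex to the $\{p\}$-vertices. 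Since the identity is universal it cannot sit in an induced $C_4$, and the remaining graph is a star together with isolated vertices, so again there is no $C_4$.

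For the $(\Rightarrow)$ direction I would argue contrapositively in stages. First, if $\lvert G\rvert$ had two distinct odd prime divisors $p, q$, then (as $p, q \geq 3$) there are at least $p-1 \geq 2$ elements of order $p$ and $q-1 \geq 2$ of order $q$, and the tool above produces a $C_4$; hence a $C_4$-free $\Gamma(G)$ forces $\lvert G\rvert = 2^a p^b$ for a single odd prime $p$ (allowing $a=0$ or $b=0$). If $\lvert G\rvert$ is a prime power we are already in the $p^n$ case, so assume $a, b \geq 1$. Letting $T$ be the set of non-identity elements of $2$-power order and $A$ the set of elements of $p$-power order, the only adjacencies among non-identity vertices lie between $T$ and $A$, and they form a complete bipartite graph; since $\lvert A\rvert \geq p-1 \geq 2$, a $C_4$ appears as soon as $\lvert T\rvert \geq 2$. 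Thus $C_4$-freeness demands $\lvert T\rvert = 1$. This rules out $a \geq 2$ (a Sylow $2$-subgroup of order $\geq 4$ alone contributes at least $3$ elements to $T$), forcing $a = 1$; and with $a=1$ the elements of $T$ are exactly the involutions, which biject with the Sylow $2$-subgroups, so $\lvert T\rvert = 1$ means $n_2 = 1$.

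Finally I would invoke Sylow's theorem to upgrade this to nilpotency: with $\lvert G\rvert = 2p^b$ and the Sylow $2$-subgroup normal, the number $n_p$ of Sylow $p$-subgroups divides $2$ and is $\equiv 1 \pmod p$; since $p$ is odd this forces $n_p = 1$. Both Sylow subgroups being normal makes $G$ their direct product, hence nilpotent of order $2p^b$. The main obstacle I anticipate is not any single step but the bookkeeping in the reverse direction that confirms the only $C_4$'s come from the $T$--$A$ bipartite part and from prime-order elements; in particular the identification $\lvert T\rvert = n_2$ when $a=1$ and the exclusion of $C_4$'s involving elements of mixed order $\{2,p\}$. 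Once the possible prime sets are pinned down to $\emptyset, \{2\}, \{p\}, \{2,p\}$, the rest is a short Sylow count.
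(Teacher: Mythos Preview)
Your argument is correct and follows essentially the same plan as the paper: build a $C_4$ from two pairs of elements of two different prime orders, use this to force $\lvert G\rvert=2^{a}p^{b}$, and finish with a Sylow count. In fact your treatment is more complete than the paper's own proof, which jumps straight from ``at most one odd prime divisor'' to the case $\lvert G\rvert=2p^{n}$ (tacitly skipping $a\ge 2$) and misstates the number of Sylow $2$-subgroups as $p-1$; your explicit reduction $\lvert T\rvert=1\Rightarrow a=1\Rightarrow n_2=1$ and the $n_p\mid 2$, $n_p\equiv 1\pmod p$ step close exactly those gaps.
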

\begin{proof}
If $G$ is a $p$-group, then $\Gamma(G)$ is a star graph, which is $C_4$-free.
Let $G$ be a nilpotent group of order $2p^n$ where $p$ is an odd prime. If $\Gamma(G)$ contains an induced $C_4$, say $g_1\sim g_2\sim g_3\sim g_4$, then the orders of $g_1$ and $g_3$ must be some powers of $p$ and orders of $g_2$ and $g_4$ must be 2. which is not possible since $G$ contains unique element of order 2 as it is a nilpotent group. 

Now, suppose $|G|$ has two odd prime divisors, say $p$ and $q$. Take $x_1,x_2\in G$ having order $p$ and $y_1,y_2$ having order $q$, then $x_1\sim y_1\sim x_2\sim y_2$ is an induced $C_4$. 

Suppose $G$ is not a nilpotent group of order $2p^n$ where $p$ is an odd prime and $n$ is a positive integer. Since the sylow-$p$ subgroup of $G$ is unique, the number of sylow-2 subgroups should be $p-1$, inorder to get rid of the nilpotency. Now, two elements of order 2 together with two elements of order $p$ will form a $C_4$.
\end{proof}

The next theorem characterises the co-prime graphs which are claw-free.
\begin{theorem}
Let $G$ be a finite group. $\Gamma(G)$ is Claw-free if and only if $\lvert G\rvert\leq3$.
\end{theorem}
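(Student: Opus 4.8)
The plan is to prove the two directions separately; the reverse implication is immediate, while the forward one is obtained by always producing an induced claw centred at the identity. For the reverse direction, if $\lvert G\rvert\le 3$ then $\Gamma(G)$ has at most three vertices, whereas an induced $K_{1,3}$ requires four distinct vertices, so $\Gamma(G)$ is trivially claw-free; concretely, $\Gamma(\mathbb{Z}_2)=K_2$ and $\Gamma(\mathbb{Z}_3)=P_3$, neither of which contains a claw.

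For the forward direction I would argue the contrapositive: assuming $\lvert G\rvert\ge 4$, I will exhibit an induced claw. The starting observation is that the identity $e$ satisfies $\pi(e)=\phi$, so by part (a) of the Proposition it is adjacent to every other vertex; thus $e$ is a universal vertex of $\Gamma(G)$. Consequently, to build an induced $K_{1,3}$ it suffices to find three distinct non-identity elements that are \emph{pairwise non-adjacent}, since together with $e$ they then induce exactly a claw. By part (a) of the Proposition again, three non-identity elements are pairwise non-adjacent precisely when their orders are pairwise non-coprime, and the cleanest way to guarantee this is to locate a single prime $p\mid\lvert G\rvert$ such that at least three elements of $G$ have order divisible by $p$.

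The key step is therefore the counting claim: for every group with $\lvert G\rvert\ge 4$ there is a prime $p$ for which at least three elements have order divisible by $p$. Writing $n_p$ for the number of elements of $G$ whose order is divisible by $p$, I would prove the claim by a short case analysis driven by the bound that a Sylow $p$-subgroup of order $p^{a}$ contributes its $p^{a}-1$ non-identity elements (all of order divisible by $p$), so $n_p\ge p^{a}-1$. If $G$ is a $p$-group then $n_p=\lvert G\rvert-1\ge 3$; if some prime $p\ge 5$ divides $\lvert G\rvert$ then Cauchy's theorem yields a subgroup of order $p$ and hence $n_p\ge p-1\ge 4$; and if some Sylow subgroup has order at least $4$ the bound $n_p\ge p^{a}-1\ge 3$ settles that case. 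The only configuration escaping all of these is $\lvert G\rvert=2\cdot 3=6$, where every Sylow subgroup has order $2$ or $3$.

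The main obstacle is exactly this residual case $\lvert G\rvert=6$, because there the crude Sylow bound gives only $n_2\ge 1$ and $n_3\ge 2$, which is not enough. I would dispose of it by hand using the two groups of order $6$: in $\mathbb{Z}_6$ the three elements of order $2$ or $6$ have pairwise non-coprime orders, while in $S_3$ the three transpositions all have order $2$ and so are pairwise non-adjacent; in either case these three elements together with $e$ induce a claw. Combining all cases shows that $\lvert G\rvert\ge 4$ forces an induced claw, which is the contrapositive of the forward implication and completes the characterisation.
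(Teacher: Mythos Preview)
Your proof is correct and follows essentially the same strategy as the paper: both centre the claw at the identity, reduce the problem to finding three pairwise non-adjacent non-identity elements sharing a common prime in their orders, run a short case analysis on the prime divisors and Sylow subgroup sizes of $\lvert G\rvert$, and dispose of the residual case $\lvert G\rvert=6$ by inspecting $\mathbb{Z}_6$ and $S_3$ directly. Your write-up is somewhat more systematic in introducing $n_p$ and making the Sylow bound $n_p\ge p^a-1$ explicit, but the underlying argument matches the paper's.
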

\begin{proof}
If $p\mid \lvert G\rvert$ where $p$ is a prime greater than 3, then there are at least three elements each of order $p$, and any three of these elements together with the identity form a claw.

Suppose $\lvert G\rvert=2^i3^j,\, i>1\, \text{or}\, j>1$. If $i>1$ then three elements each of order a power of 2, or if $j>1$ then three elements each of order a power of 3, together with the identity induce a claw.

If $\lvert G\rvert=6$, then elements of orders 6, 2 and identity in $\mathbb{Z}_6$ and three elements of order 2 and identity in $S_3$ induce claws.

Clearly $\Gamma(G)$ is claw-free if $\lvert G\rvert\leq 3$.
\end{proof} 

Using similar proofs we can show that $\Gamma(G)$ is $K_{1,4}$-free if and only if $\lvert G\rvert\leq 4$.

Let $G$ be a finite group of order $n$. The \textit{Gruenberg-Kegel graph}\cite{cameron2022finite} or \textit{prime graph} of $G$ is a graph with vertex set $\pi(n)$ in which two distinct vertices $p_1$ and $p_2$ are adjacent if and only if $G$ contains an element of order $p_1p_2$.

A graph forbidding path of order 4 is called a cograph. Cographs have many key properties such as they form the smallest class of graphs containing the 1-vertex graph and closed under disjoint union and complimentation.
\begin{proposition}
Let $G$ be a finite group. If $\Gamma(G)$ is a \textit{cograph} then \textit{Gruenberg-kegel graph} of $G$ is $P_3$-free.

\end{proposition}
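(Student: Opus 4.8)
The plan is to prove the contrapositive: if the Gruenberg--Kegel graph of $G$ contains an induced $P_3$, then $\Gamma(G)$ contains an induced $P_4$, so $\Gamma(G)$ is not a cograph. An induced $P_3$ in the prime graph is a triple of distinct primes $p,q,r\in\pi(G)$ in which the middle vertex $q$ is adjacent to both $p$ and $r$ while $p$ is not adjacent to $r$. Translating the two edges through the definition of the prime graph, $G$ contains an element $g$ with $o(g)=pq$ and an element $h$ with $o(h)=qr$.

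From $g$ and $h$ I would extract the four vertices of the forbidden path. Put $u_p=g^{q}$ and $u_r=h^{q}$, so that $o(u_p)=p$ and $o(u_r)=r$. The governing principle is part (a) of the Proposition: two elements are adjacent in $\Gamma(G)$ exactly when their prime-set supports are disjoint. Here $\pi(g)=\{p,q\}$, $\pi(h)=\{q,r\}$, $\pi(u_p)=\{p\}$ and $\pi(u_r)=\{r\}$. These four elements have pairwise distinct orders $pq,\,qr,\,p,\,r$, hence are distinct vertices of $\Gamma(G)$.

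The key step is to check that the ordered quadruple $g,\,u_r,\,u_p,\,h$ induces a $P_4$. For the three path-edges, $\pi(g)\cap\pi(u_r)=\emptyset$, $\pi(u_r)\cap\pi(u_p)=\emptyset$ and $\pi(u_p)\cap\pi(h)=\emptyset$, so $g\sim u_r\sim u_p\sim h$ by part (a). For the three chords that must be absent, $q\in\pi(g)\cap\pi(h)$, $p\in\pi(g)\cap\pi(u_p)$ and $r\in\pi(u_r)\cap\pi(h)$, so none of the pairs $\{g,u_p\}$, $\{u_r,h\}$, $\{g,h\}$ is an edge. This exhibits an induced $P_4$ and finishes the contrapositive.

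Notice that the construction invokes only the two edges $p\sim q$ and $q\sim r$ of the prime graph and never the non-edge $p\not\sim r$; the induced $P_3$ supplies more than is actually needed. Consequently the argument carries no deep content, and I expect the only real work to be the adjacency bookkeeping: choosing two elements of \emph{mixed} orders $pq,qr$ alongside two elements of the \emph{prime} orders $p,r$, and then arranging them as $g,u_r,u_p,h$ so that consecutive supports are disjoint while each of the three non-consecutive pairs shares a prime. Once that arrangement is found, every adjacency and non-adjacency drops out immediately from part (a) of the Proposition, so the main obstacle is simply identifying the correct four elements and their order rather than proving anything substantive.
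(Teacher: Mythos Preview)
Your proof is correct and follows essentially the same route as the paper: both argue the contrapositive, select elements of orders $p_1p_2$, $p_2p_3$, $p_1$, $p_3$, and arrange them as an induced $P_4$ in the order (mixed, prime, prime, mixed). Your version is slightly more careful in that you explicitly verify all three non-edges, and your closing observation that the non-edge $p\not\sim r$ is never used is exactly the content of the paper's subsequent Theorem~6.
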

\begin{proof}
Suppose that \textit{Gruenberg-Kegel graph} of $G$ contains an induced $P_3$, say, $p_1,p_2,p_3$. Then $G$ contains elements of order $p_1p_2$ and $p_2p_3$ and does not contain an element of order $p_1p_3$. Take $x,y,z,w\in G$ such that $o(x)=p_1,\, o(y)=p_3,\, o(z)=p_1p_2$ and $o(w)=p_2p_3$. Then $z\sim y\sim x\sim w$ is a $P_4$ in $\Gamma(G)$.
\end{proof}
The converse of Proposition 2 need not be true as the co-prime graph of $\mathbb{Z}_{30}$ is not a cograph but its \textit{Gruenberg-Kegal graph} is $K_3$ which is $P_3$-free.
\begin{theorem}
Let $G$ be a finite nilpotent group. Then, $\Gamma(G)$ is a cograph if and only if $\lvert \pi(G)\rvert<3$.
\end{theorem}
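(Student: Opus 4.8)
The plan is to reduce everything to prime supports via Proposition 1(a): two vertices of $\Gamma(G)$ are adjacent exactly when $\pi(g_1)\cap\pi(g_2)=\phi$, so the whole adjacency structure is governed by the sets $\pi(g)\subseteq\pi(G)$. The crucial structural input from nilpotency is that $G$ is the internal direct product of its Sylow subgroups; consequently, for \emph{any} subset $S\subseteq\pi(G)$ there exists $g\in G$ with $\pi(g)=S$ (take a nontrivial element in each Sylow subgroup whose prime lies in $S$ and the identity in the others). I would record this realizability of supports at the outset, since it is the only place nilpotency genuinely enters.

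For the forward implication I would argue the contrapositive: assume $\lvert\pi(G)\rvert\ge 3$ and fix three distinct primes $p,q,r\in\pi(G)$. Using realizability, choose $a,b,c,d\in G$ with $\pi(a)=\{p,r\}$, $\pi(b)=\{q\}$, $\pi(c)=\{p\}$ and $\pi(d)=\{q,r\}$. A direct disjointness check with Proposition 1(a) shows the only adjacencies among these four are $a\sim b$, $b\sim c$ and $c\sim d$ (while $\pi(a)\cap\pi(c)=\{p\}$, $\pi(b)\cap\pi(d)=\{q\}$, $\pi(a)\cap\pi(d)=\{r\}$ are all nonempty), so $a,b,c,d$ induce a $P_4$ and $\Gamma(G)$ is not a cograph.

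For the converse I would assume $\lvert\pi(G)\rvert\le 2$, say $\pi(G)\subseteq\{p,q\}$, and suppose for contradiction that $v_1\sim v_2\sim v_3\sim v_4$ is an induced $P_4$. First eliminate the identity $e$: since $\pi(e)=\phi$ is disjoint from every support, $e$ is adjacent to all other vertices, and a universal vertex cannot lie on an induced $P_4$ (every vertex of a $P_4$ has a non-neighbour among the other three). Next eliminate any vertex of support $\{p,q\}$: such a support meets every nonempty subset of $\{p,q\}$, so apart from $e$ the vertex is isolated, contradicting that every vertex of a $P_4$ has a neighbour. Hence each $v_i$ has support $\{p\}$ or $\{q\}$, and by Proposition 1(a) the induced subgraph is complete bipartite between the two support classes; as complete bipartite graphs contain no induced $P_4$, this is the desired contradiction, so $\Gamma(G)$ is $P_4$-free, i.e.\ a cograph.

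The only real obstacle is the forward direction, specifically producing elements realizing the two mixed supports $\{p,r\}$ and $\{q,r\}$: this is exactly where nilpotency is indispensable, since in a non-nilpotent group an element whose order is divisible by two prescribed primes need not exist (the Gruenberg--Kegel graph may fail to contain the relevant edge). Everything else is a routine disjointness verification, and it is worth noting that the converse direction uses nilpotency only through the bound $\lvert\pi(G)\rvert\le 2$.
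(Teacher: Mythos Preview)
Your proof is correct and follows essentially the same approach as the paper: the $P_4$ you construct from supports $\{p,r\},\{q\},\{p\},\{q,r\}$ is (up to relabeling) exactly the paper's path on elements of orders $p_1p_2,\,p_3,\,p_1,\,p_2p_3$, with nilpotency used in the same way to realize the mixed supports. For the converse the paper extracts three distinct primes from the pairwise intersections of the supports along an induced $P_4$, which is equivalent to your case analysis showing that with at most two primes the non-identity, non-full-support vertices induce a complete bipartite graph.
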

\begin{proof}
Suppose that $\Gamma(G)$ contains an induced $P_4$, say $x,y,z,w$. Then by Proposition 1, $\pi(x)\cap \pi(y)=\phi,\, \pi(y)\cap\pi(z)=\phi, \, \pi(z)\cap \pi(w)=\phi,\, \pi(x)\cap\pi(z)\neq \phi, \, \pi(y)\cap \pi(w)\neq \phi,$ and $\pi(x)\cap\pi(w)\neq \phi$. Let $p_1\in \pi(x)\cap\pi(z),\, p_2\in \pi(y)\cap \pi(w)$ and $p_3\in\pi(x)\cap\pi(w) $, then $p_1\neq p_2\neq p_3$ since $\pi(x)\cap \pi(y)=\phi$, and hence $\lvert \pi(G)\rvert\geq3$.

Conversely, suppose $\lvert \pi(G)\rvert\geq3$ and let $p_1,p_2,p_3\in \pi(G)$. Since $G$ is nilpotent, there exist elements $x,y\in G$ such that $o(x)=p_1p_2$ and $o(y)=p_2p_3$. Let $z,w\in G$ such that $o(z)=p_1$ and $o(w)=p_3$. Then $x\sim w\sim z\sim y$ is an induced $P_4$ in $\Gamma(G)$.
\end{proof}
Using Proposition 2 and Theorem 3.4, we can generalize the result as follows.
\begin{theorem}
Let $G$ be a finite group. Then $\Gamma(G)$ is a cograph if and only if $G$ contains no elements $g_1$ and $g_2$ with orders $p_1p_2$ and $p_2p_3$ respectively where $p_1,p_2$ and $p_3$ are primes and $p_2\neq p_3$.
\end{theorem}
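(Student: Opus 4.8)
The plan is to prove both implications by contrapositive, using Proposition 1(a) to convert adjacency in $\Gamma(G)$ into disjointness of prime sets. It is convenient to read the primes $p_1,p_2,p_3$ as pairwise distinct: for $p_1p_2$ and $p_2p_3$ to be the orders claimed they are squarefree with two prime factors, and the two ``outer'' primes must differ for the pair to carry any information. In Gruenberg--Kegel terms the forbidden configuration is simply two edges sharing a vertex, i.e. a vertex of degree at least two, so the theorem really asserts that $\Gamma(G)$ is a cograph exactly when the Gruenberg--Kegel graph of $G$ has maximum degree at most one.

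First I would handle the easy direction, that the existence of such $g_1,g_2$ forces an induced $P_4$. Reusing the construction in the proof of Theorem 3.4, set $x=g_1$ and $y=g_2$, take a power $z$ of $g_1$ with $o(z)=p_1$ and a power $w$ of $g_2$ with $o(w)=p_3$, so that $\pi(x)=\{p_1,p_2\}$, $\pi(y)=\{p_2,p_3\}$, $\pi(z)=\{p_1\}$ and $\pi(w)=\{p_3\}$. Proposition 1(a) then gives $x\sim w\sim z\sim y$, while the shared primes $p_1,p_2,p_3$ kill the chords $x\not\sim z$, $x\not\sim y$, $w\not\sim y$; as the four orders are distinct these are four distinct vertices inducing a $P_4$, so $\Gamma(G)$ is not a cograph.

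For the converse I would start from an induced $P_4$, say $a\sim b\sim c\sim d$ with $a\not\sim c$, $b\not\sim d$, $a\not\sim d$, and translate via Proposition 1(a): the edges give $\pi(a)\cap\pi(b)=\pi(b)\cap\pi(c)=\pi(c)\cap\pi(d)=\phi$ and the non-edges give $\pi(a)\cap\pi(c)$, $\pi(b)\cap\pi(d)$, $\pi(a)\cap\pi(d)$ all nonempty. Picking $p_1\in\pi(a)\cap\pi(c)$, $p_2\in\pi(b)\cap\pi(d)$ and $p_3\in\pi(a)\cap\pi(d)$, the three empty intersections force $p_1,p_2,p_3$ pairwise distinct. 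Since $p_1,p_3\in\pi(a)$ and $p_2,p_3\in\pi(d)$, a suitable power of $a$ has order $p_1p_3$ and a suitable power of $d$ has order $p_2p_3$; these elements share the prime $p_3$ and have distinct outer primes, which is exactly the forbidden pair (with $p_3$ playing the role of the common prime). This contradicts the hypothesis and finishes the proof.

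The argument is mostly a mechanical translation through Proposition 1, and I expect the only delicate points to be bookkeeping the distinctness of $p_1,p_2,p_3$ and correctly re-indexing the extracted orders to match the statement. I would also record why this genuinely strengthens Proposition 2: that result only gives that a cograph has a $P_3$-free (hence disjoint-union-of-cliques) Gruenberg--Kegel graph, which does not by itself forbid a triangle, whereas a triangle already supplies orders $p_1p_2$ and $p_2p_3$ and hence a $P_4$. This is precisely why $\mathbb{Z}_{30}$, whose Gruenberg--Kegel graph is $K_3$, fails to be a cograph, and Theorem 3.4 is recovered as the nilpotent special case, where $\lvert\pi(G)\rvert\ge 3$ automatically yields elements of orders $p_1p_2$ and $p_2p_3$.
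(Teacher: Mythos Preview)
Your proof is correct and is exactly the argument the paper has in mind: the paper gives no separate proof of this theorem but merely points to Proposition~2 and Theorem~3.4, and your two directions reproduce those arguments verbatim (the $P_4$ construction of Proposition~2 for the forward direction, and the prime-extraction of Theorem~3.4's first paragraph for the converse), with the one small extra step of passing to suitable powers of $a$ and $d$ to exhibit elements of orders $p_1p_3$ and $p_2p_3$. Your remark that the condition is really ``Gruenberg--Kegel graph has maximum degree at most one'' is a cleaner way to phrase the statement and explains both why Proposition~2 is strictly weaker and why $\mathbb{Z}_{30}$ is the right counterexample.
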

\begin{theorem}
For a finite group $G$, $\Gamma(G)$ is a split graph if and only if $G$ is a nilpotent group of order $p^n$ or $2p^n$ where $p$ is a prime.
\end{theorem}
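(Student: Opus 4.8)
The plan is to combine the Földes–Hammer forbidden-subgraph description of split graphs with the $C_4$-free characterisation already obtained in Theorem 3.2. Recall that a finite graph is a split graph if and only if it contains no induced $2K_2$, $C_4$, or $C_5$; in particular every split graph is $C_4$-free. This disposes of the converse at once: if $\Gamma(G)$ is a split graph then it is $C_4$-free, and Theorem 3.2 forces $G$ to be a nilpotent group of order $p^n$ or $2p^n$.

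For the forward implication I would exhibit a split partition of the vertex set in each of the two cases. If $\lvert G\rvert=p^n$, then (as in the proof of Theorem 3.2) $\Gamma(G)$ is a star centred at $e$, and a star is trivially a split graph: take the clique $\{e\}$ (or $e$ together with one leaf) and let the remaining vertices form the independent set.

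If $G$ is nilpotent of order $2p^n$ with $p$ an odd prime, then $G\cong \mathbb{Z}_2\times P$ with $\lvert P\rvert=p^n$, so $G$ has a unique element $t$ of order $2$. I would then partition the vertices as $K=\{e,t\}$ and $I=G\setminus\{e,t\}$. Here $K$ is a clique because $\gcd(o(e),o(t))=1$, so $e\sim t$. To see that $I$ is independent, note that every vertex of $I$ has order divisible by $p$ (its order is $p^k$ or $2p^k$ for some $k\geq 1$, since the only elements of order coprime to $p$ are $e$ and $t$), so any two such vertices share the prime $p$; by Proposition 1(a) they are non-adjacent. Hence $V(\Gamma(G))=K\sqcup I$ is a valid split partition.

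There is no serious obstacle here, as the heavy lifting is done by Theorem 3.2; the only points requiring care are invoking the correct $\{2K_2,C_4,C_5\}$-free characterisation of split graphs for the converse, and observing that in the $2p^n$ case the cross edges between $K$ and $I$ are allowed in a split partition and need not be controlled. What makes $\{e,t\}$ the right clique to peel off is precisely the uniqueness of the involution $t$, which is guaranteed by nilpotency.
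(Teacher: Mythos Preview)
Your proof is correct. Both you and the paper invoke the F\"oldes--Hammer characterisation and Theorem~3.2 to handle the direction ``$\Gamma(G)$ split $\Rightarrow$ $G$ nilpotent of order $p^n$ or $2p^n$''. The difference lies in the other direction: the paper stays within the forbidden-subgraph framework, arguing that an induced $2K_2$ forces $\lvert\pi(G)\rvert\geq 4$ and an induced $C_5$ forces $\lvert\pi(G)\rvert\geq 5$, so that groups with $\lvert\pi(G)\rvert\leq 2$ automatically avoid all three obstructions. You instead bypass the forbidden-subgraph criterion entirely and exhibit the split partition $\{e,t\}\sqcup(G\setminus\{e,t\})$ directly. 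Your route is more constructive and makes the split structure explicit; the paper's route yields the side information that $2K_2$ and $C_5$ cannot occur in $\Gamma(G)$ whenever $\lvert\pi(G)\rvert\leq 3$ and $\leq 4$ respectively, which is a little stronger than what is strictly needed.
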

\begin{proof}
We prove this using the characterization of split graphs which states that: a graph is a split graph if and only if it has no induced subgraphs isomorphic to $C_4$, $C_5$ and $2K_2$.

First we show that if $\Gamma(G)$ contains an induced $2K_2 $, then $\lvert \pi(G)\rvert\geq 4$. Suppose $x_1,x_2$ and $y_1,y_2$ forms $2K_2$ in $\Gamma(G)$. Then there exists primes $p_1,p_2,p_3$ and $p_4$ in $\pi(G)$ such that $p_1\in \pi(x_1)\cap \pi(y_1),\, p_2\in \pi(x_1)\cap \pi(y_2),\, p_3\in \pi(x_2)\cap \pi(y_1)$ and $p_4\in \pi(x_2)\cap \pi(y_2)$ and $p_1\neq p_2\neq p_3\neq p_4$ since $\pi(x_1)\cap\pi(x_2)=\phi$ and $\pi(y_1)\cap\pi(y_2)=\phi$. Similarly we can show that if $\Gamma(G)$ contains an induced $C_5$, then $\lvert \pi(G)\rvert\geq 5$.

Hence using Theorem 3.2, we conclude that $\Gamma(G)$ is split if and only if $G$ is a nilpotent group of order $p^n$ or $2p^n$ for some prime $p$.
\end{proof}
\section{Asteroidal triples}
In this section we investigate groups whose co-prime graphs does not contain asteroidal triples.
\begin{proposition}
Let $G$ be a finite group such that $\Gamma(G)$ contains an AT, then $\lvert\pi(G)\rvert\geq 3$.
\end{proposition}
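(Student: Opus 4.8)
The plan is to prove the contrapositive: if $\lvert \pi(G)\rvert \le 2$, then $\Gamma(G)$ contains no AT. The case $\lvert\pi(G)\rvert=1$ is immediate, since then $G$ is a $p$-group and, as noted in the proof of Theorem 3.2, $\Gamma(G)$ is a star with center $e$ (every non-identity element has order a power of $p$, hence shares $p$ with every other non-identity element and is adjacent only to $e$). In a star every independent triple consists of leaves, and each path joining two leaves runs through the center, which is adjacent to the third leaf; so no AT can exist. The substance of the argument is therefore the case $\lvert\pi(G)\rvert=2$.

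So I would write $\pi(G)=\{p,q\}$ and partition the non-identity elements of $G$ according to their prime-divisor sets into $V_p=\{g:\pi(g)=\{p\}\}$, $V_q=\{g:\pi(g)=\{q\}\}$, and $V_{pq}=\{g:\pi(g)=\{p,q\}\}$. Using Proposition 1(a) I would record the adjacency structure: $e$ is adjacent to every other vertex; $V_p$, $V_q$, and $V_{pq}$ are each independent sets; every vertex of $V_p$ is adjacent to every vertex of $V_q$; and no vertex of $V_{pq}$ is adjacent to any vertex of $V_p\cup V_q$. In particular, each vertex of $V_{pq}$ has $e$ as its only neighbour, and each vertex of $V_p$ has open neighbourhood exactly $\{e\}\cup V_q$ (symmetrically for $V_q$).

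Now suppose, for contradiction, that $\{a,b,c\}$ is an AT. Since $e$ is universal it can belong to no independent triple, so $a,b,c\neq e$ and each lies in $V_p\cup V_q\cup V_{pq}$. First I would rule out $V_{pq}$: if, say, $a\in V_{pq}$, then $N(a)=\{e\}$, so any path leaving $a$ must step immediately to $e$; but $e$ is adjacent to $c$, hence $e\in N(c)$, and so no $a$–$b$ path can avoid $N(c)$, contradicting the AT condition. Thus $a,b,c\in V_p\cup V_q$. Since every $V_p$–$V_q$ pair is adjacent, an independent triple cannot meet both parts, so all three lie in one part, say $a,b,c\in V_p$.

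Finally I would derive the contradiction from the uniformity of neighbourhoods inside $V_p$: each of $a,b,c$ has open neighbourhood $\{e\}\cup V_q=N(c)$. A path from $a$ to $b$ must begin by stepping to some vertex of $N(a)=\{e\}\cup V_q$, and every such vertex lies in $N(c)$; hence no $a$–$b$ path avoids $N(c)$, again contradicting the AT condition. This exhausts all configurations, proving that $\lvert\pi(G)\rvert\le 2$ forbids an AT. The only delicate point is the bookkeeping of the path-avoidance clause in the definition of an AT; once the neighbourhoods of $V_p$, $V_q$, and $V_{pq}$ are pinned down, each case collapses because either the unique neighbour $e$ of a $V_{pq}$-vertex, or the common neighbourhood $\{e\}\cup V_q$ of the $V_p$-vertices, already lies in the forbidden set $N(\cdot)$ of the third vertex.
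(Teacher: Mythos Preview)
Your contrapositive argument is correct, but the paper's proof is shorter and proceeds directly via Proposition~1(b). The paper observes that if $\{x_1,x_2,x_3\}$ is an AT and $\pi(x_1)\subseteq\pi(x_2)$, then $N(x_2)\subseteq N(x_1)$, so the very first step of any $x_2$--$x_3$ path already lands in $N(x_1)$, contradicting the AT condition. Hence the three sets $\pi(x_1),\pi(x_2),\pi(x_3)$ form an antichain in the subset lattice of $\pi(G)$, and since a $2$-element set has no antichain of size~$3$, one gets $\lvert\pi(G)\rvert\geq 3$ immediately. Your case analysis ($V_{pq}$ versus all-in-$V_p$) is really this antichain observation unfolded: the $V_{pq}$ case is $\pi(a)\supseteq\pi(c)$, and the all-in-$V_p$ case is $\pi(a)=\pi(c)$. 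The paper's route scales cleanly (it is reused verbatim in Theorem~4.2 for $\lvert\pi(G)\rvert=3$), whereas your partition-by-prime-support approach, while perfectly valid here, would require more bookkeeping if extended.
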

\begin{proof}
Let the vertices $x_1,x_2$ and $x_3$ form an AT in $\Gamma(G)$. If $\pi(x_1)\subseteq \pi(x_2)$, then by Proposition 1, $N(x_2)\subseteq N(x_1)$ and hence any path between $x_2$ and $x_3$ will contain a neighbor of $x_1$, which is not possible. Hence, $\pi(x_i)\nsubseteq \pi(x_j)$, $i,j=1,2,3$ and $i\neq j$. Therefore, at least three distinct primes divide $|G|$.
\end{proof}
\begin{theorem}
Let $G$ be a finite nilpotent group. Then, $\Gamma(G)$ is AT-free if and only if $\lvert \pi(G)\rvert<3$.
\end{theorem}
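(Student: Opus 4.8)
The plan is to handle the two directions separately, with essentially all the work concentrated in one of them. One implication is immediate: if $\lvert \pi(G)\rvert < 3$, then by the preceding Proposition $\Gamma(G)$ cannot contain an AT, so $\Gamma(G)$ is AT-free. Notice that this half uses no nilpotency. The substance of the theorem is therefore the converse: assuming $G$ is nilpotent with $\lvert \pi(G)\rvert \geq 3$, I would exhibit an explicit asteroidal triple in $\Gamma(G)$, thereby showing $\Gamma(G)$ is not AT-free.

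To build the triple, fix three distinct primes $p_1,p_2,p_3\in\pi(G)$. Here nilpotency does the essential work: since $G$ is the direct product of its Sylow subgroups, for any two of these primes $p_i,p_j$ there is an element of order exactly $p_ip_j$ (take the product of a $p_i$-element and a $p_j$-element drawn from the respective Sylow factors, each of which exists by Cauchy's theorem), and likewise there are elements of each prime order $p_i$. I would take the three candidate vertices $a,b,c$ to have orders $p_1p_2$, $p_2p_3$, and $p_3p_1$. By Proposition 1(a) two vertices are adjacent exactly when their prime sets are disjoint; since each pair among $a,b,c$ shares one prime ($a,b$ share $p_2$, $b,c$ share $p_3$, $c,a$ share $p_1$), the set $\{a,b,c\}$ is independent, as an AT requires.

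It then remains to produce, for each pair, a path avoiding the neighbourhood of the third vertex. The key observation is that a length-two path can never succeed: any common neighbour of two of the vertices would have prime set disjoint from all three primes, and hence would itself be a neighbour of the third. So I would route each connection through two intermediate vertices of prime order. For the pair $a,b$ (third vertex $c$ of order $p_3p_1$), I would use $a\sim u\sim v\sim b$ with $o(u)=p_3$ and $o(v)=p_1$; each step is an adjacency by disjointness of prime sets, and neither $u$ nor $v$ lies in $N(c)$ precisely because their orders are divisible by $p_3$ and by $p_1$ respectively. The remaining two paths follow by cyclically permuting the roles of the primes. The endpoints cause no difficulty, since the three chosen vertices are mutually non-adjacent and so none is a neighbour of the third, and distinctness of all vertices on each path is clear from their orders.

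I expect the path-construction step to be the main obstacle, specifically recognizing that the naive short path through a common neighbour is exactly what the AT definition forbids, and then choosing prime-order intermediate vertices that keep every internal vertex outside the forbidden neighbourhood. A secondary point worth stating explicitly is the indispensability of nilpotency: it is what guarantees the order-$p_ip_j$ vertices (equivalently, that the Gruenberg--Kegel graph of $G$ is complete), and without such elements the independent set $\{a,b,c\}$ could not even be formed.
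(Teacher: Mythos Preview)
Your proposal is correct and follows essentially the same route as the paper: the ``only if'' direction is the preceding Proposition, and for the ``if'' direction you construct the same asteroidal triple from elements of orders $p_1p_2,\;p_2p_3,\;p_1p_3$ together with prime-order elements as intermediate vertices in the length-three connecting paths. Your write-up is in fact more explicit than the paper's (you justify independence, explain why length-two paths necessarily fail, and isolate where nilpotency is used), but the underlying construction is identical.
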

\begin{proof}
Let $\lvert \pi(G)\rvert \geq 3$ and let $p_1,p_2,p_3$ are distinct primes which divide $\lvert G\rvert$. Since $G$ is nilpotent there are elements in $G$ of orders $p_1p_2, p_2p_3$ and $p_1p_3$. Select elements $u,v,w,x,y,z\in G$ such that $o(u)=p_1,o(v)=p_2,o(w)=p_3,o(x)=p_1p_2,o(z)=p_1p_3, o(y)=p_2p_3$. Then $\{x,y,z\}$ is an asteroidal triple and the required paths are $x\sim w\sim u\sim y$, $y\sim u\sim v\sim z$ and $x\sim w\sim v\sim z$.

\end{proof}
The following example shows that this characterization does not hold if $G$ is not nilpotent.
\begin{example}
The group $S_3\times \mathbb{Z}_5$ is not nilpotent and $\lvert \pi(S_3\times \mathbb{Z}_5)\rvert=3$, which does not contain an AT, whereas $D_{30}$, which is also not nilpotent and $\lvert\pi(D_{30})\rvert=3$, contains asteroidal tripples. 
\end{example}
\begin{theorem}
Let $G$ be a finite group such that $\lvert\pi(G)\rvert=3$ and let $\pi(G)=\{p_1,p_2,p_3\}$, then $\Gamma(G)$ has an asteroidal triple if and only if there exists elements in $G$ of orders $p_1p_2,p_1p_3$ and $p_2p_3$.
\end{theorem}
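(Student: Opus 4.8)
The plan is to prove the two implications separately, relying on Proposition 1 and on the incomparability property of the three vertices of an asteroidal triple that is extracted in the proof of the preceding proposition. For the forward (``if'') direction the key realisation is that the construction used for nilpotent groups never actually uses nilpotency: it only needs elements of the three ``mixed'' orders $p_1p_2,p_1p_3,p_2p_3$ together with elements of the prime orders $p_1,p_2,p_3$, and the latter are supplied for free by Cauchy's theorem. For the reverse (``only if'') direction I would classify the possible prime-divisor-sets of an asteroidal triple and show they must be exactly the three $2$-element subsets of $\{p_1,p_2,p_3\}$.

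For the ``if'' direction, assume $G$ has elements $x,y,z$ with $o(x)=p_1p_2$, $o(y)=p_2p_3$, $o(z)=p_1p_3$, so that $\pi(x)=\{p_1,p_2\}$, $\pi(y)=\{p_2,p_3\}$ and $\pi(z)=\{p_1,p_3\}$. By Cauchy's theorem there are $u,v,w\in G$ with $o(u)=p_1$, $o(v)=p_2$, $o(w)=p_3$, and these six elements are pairwise distinct since their prime-divisor-sets differ. By Proposition 1(a) the set $\{x,y,z\}$ is independent because each pair shares a prime. I would then verify, exactly as in the nilpotent case, that the paths $x\sim w\sim u\sim y$, $y\sim u\sim v\sim z$ and $x\sim w\sim v\sim z$ witness the asteroidal triple: in each path the two internal vertices are single-prime elements whose prime lies in $\pi$ of the omitted vertex, so by Proposition 1(a) neither is adjacent to that omitted vertex, while the endpoints are non-neighbours of it by independence. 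This verification is routine.

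For the ``only if'' direction, suppose $\{x_1,x_2,x_3\}$ is an asteroidal triple. Since the identity is adjacent to every vertex, no $x_i$ equals $e$, so each $\pi(x_i)$ is nonempty, and $\pi(x_i)\subseteq\{p_1,p_2,p_3\}$ as $\lvert\pi(G)\rvert=3$. By the argument in the preceding proposition, $\pi(x_i)\nsubseteq\pi(x_j)$ for $i\neq j$ (otherwise Proposition 1(b) gives $N(x_j)\subseteq N(x_i)$, and the internal vertex adjacent to $x_j$ on any path from $x_j$ to the third vertex would then be a neighbour of $x_i$, which the asteroidal-triple condition forbids). The decisive observation is that no $\pi(x_i)$ can be a singleton: if $\pi(x_i)=\{p\}$, then independence (Proposition 1(a)) forces $p\in\pi(x_j)$ for the other indices $j$, i.e. $\pi(x_i)\subseteq\pi(x_j)$, contradicting incomparability. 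Neither can $\pi(x_i)$ be all of $\{p_1,p_2,p_3\}$, since that would contain the others. Hence each $\pi(x_i)$ has exactly two elements, and incomparability makes them distinct, so they are precisely $\{p_1,p_2\},\{p_1,p_3\},\{p_2,p_3\}$. Finally, if $\pi(x_i)=\{p_a,p_b\}$ then $o(x_i)=p_a^{\alpha}p_b^{\beta}$ with $\alpha,\beta\geq 1$, so the cyclic subgroup $\langle x_i\rangle$ contains an element of order $p_ap_b$; this produces elements of all three orders $p_1p_2,p_1p_3,p_2p_3$.

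The ``if'' direction is essentially mechanical once one notices that the nilpotent construction used nilpotency only to guarantee the mixed-order elements. The genuine content lies in the ``only if'' direction, and the step I expect to be most delicate is pinning down the admissible prime-divisor-sets: one must combine independence (pairwise intersecting sets) with the incomparability coming from the asteroidal-triple structure, and the cleanest way to do so is the singleton-exclusion argument above. Some care is also needed in invoking incomparability correctly, since it is a consequence extracted from the \emph{proof} of the preceding proposition rather than from its statement.
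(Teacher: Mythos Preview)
Your proof is correct and follows essentially the same approach as the paper. The paper's argument is much terser: for the ``only if'' direction it simply cites $\pi(x_i)\nsubseteq\pi(x_j)$ and asserts that the orders must be of the form $p_a^{k}p_b^{l}$ with positive exponents (your singleton-exclusion and full-set-exclusion arguments supply the justification the paper omits), and for the ``if'' direction it merely states that three elements of the given orders ``clearly'' form an asteroidal triple, whereas you spell out the witnessing paths exactly as in the nilpotent case and invoke Cauchy's theorem explicitly.
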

\begin{proof}
Let $x_1,x_2,x_3$ be an asteroidal triple. Then $\pi(x_i)\nsubseteq \pi(x_j)$ for $i,j=1,2,3$ and $i\neq j$ by part (b) of Proposition 1. Therefore, $o(x_1)=p_1^{k_1}p_2^{k_2}, o(x_2)=p_1^{k_3}p_3^{k_4}$ and $o(x_3)=p_2^{k_5}p_3^{k_6}$, where $k_1,k_2,...k_6$ are positive integers. Hence there are elements of required order.

Clearly three elements of given orders form an asteroidal triple.
\end{proof}

\begin{proposition}
Let $G$ be a finite group such that $\Gamma(G)$ is $AT$-free and $\lvert \pi(G)\rvert\geq 4$, then $Z(G)$ is trivial.
\end{proposition}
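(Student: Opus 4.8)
The plan is to establish the contrapositive: supposing $Z(G)$ is nontrivial while $\lvert\pi(G)\rvert\geq 4$, I will construct an asteroidal triple in $\Gamma(G)$, contradicting AT-freeness. The governing idea is that a single central element of prime order $p$ can be absorbed into Cauchy elements for three \emph{other} primes so as to produce three mutually non-adjacent vertices whose $\pi$-sets all share $p$, while the bare Cauchy elements serve as the connecting midpoints.

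First I would pass to a central element of prime order. If $Z(G)\neq\{e\}$, choose $z\in Z(G)\setminus\{e\}$ and replace it by a suitable power, using that powers of central elements are again central, so that $o(z)=p$ for some prime $p\in\pi(G)$. Because $\lvert\pi(G)\rvert\geq 4$, there remain three primes $q_1,q_2,q_3\in\pi(G)$ distinct from $p$, and by Cauchy's theorem I may pick $g_i\in G$ with $o(g_i)=q_i$ for $i=1,2,3$.

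Next, since $z$ is central and $\gcd(p,q_i)=1$, the products $a_i:=zg_i$ have order $pq_i$ (as $z$ and $g_i$ commute and have coprime orders), so $\pi(a_i)=\{p,q_i\}$. As any two of these $\pi$-sets share the prime $p$, part (a) of Proposition 1 makes $\{a_1,a_2,a_3\}$ an independent set. I would then verify the asteroidal condition through the midpoints $g_k$: for the pair $\{a_i,a_j\}$ with omitted vertex $a_k$, where $\{i,j,k\}=\{1,2,3\}$, the path $a_i\sim g_k\sim a_j$ serves, because $\pi(g_k)=\{q_k\}$ is disjoint from $\pi(a_i)$ and from $\pi(a_j)$ yet meets $\pi(a_k)=\{p,q_k\}$ in $q_k$, so by Proposition 1(a) the vertex $g_k$ is not adjacent to $a_k$. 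Distinctness of all six elements follows at once from their pairwise distinct orders.

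The argument is elementary, so I anticipate no serious obstacle; the care required is purely organizational, namely keeping track of which prime is forced to sit in every $\pi(a_i)$ and attaching each midpoint $g_k$ to the correct omitted vertex. The crux is the single observation that centrality of $z$ pins $p$ into $\pi(a_i)$ for all three $i$, which simultaneously guarantees that the triple is independent and that each detour path avoids the neighbourhood of the vertex it must bypass.
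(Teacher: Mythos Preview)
Your proof is correct and follows essentially the same approach as the paper: take a central element of prime order $p$, multiply it into Cauchy elements for three other primes, and observe that the three products form an asteroidal triple. You are in fact more explicit than the paper, which merely asserts that $\{xy,xz,xw\}$ is an AT without writing down the connecting paths; your identification of $g_k$ as the midpoint for the pair omitting $a_k$ supplies exactly the verification the paper leaves to the reader.
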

\begin{proof}
Suppose $Z(G)$ is not trivial and let $x\in Z(G)$, $x\neq e$. Without loss of generality assume that $o(x)$ is a prime, say, $p$. Since $\pi(G)\geq 4$ there are elements $y,z,w$ in $G$ of different prime orders other than $p$. Since $x$ commutes with all these elements $\{xy,xz,xw\}$ will form an $AT$ in $\Gamma(G)$.
\end{proof}
If $G$ is a group such that $\Gamma(G)$ is AT-free and $\lvert\pi(G)\rvert=3$, then $Z(G)$ need not be trivial, $S_3\times \mathbb{Z}_5$ is an example for that.

\begin{theorem}
The co-prime graph of symmetric group $S_n$ is AT-free if and only if $n\leq 7$.
 \end{theorem}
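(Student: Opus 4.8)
The plan is to split the argument at the threshold $n=7$ and to use throughout the fact, recorded in Proposition 1, that adjacency and domination in $\Gamma(G)$ are governed entirely by the prime supports $\pi(g)$. Recall from the proof that an AT forces $|\pi(G)|\ge 3$ that any asteroidal triple $\{x_1,x_2,x_3\}$ must be an independent set whose supports are pairwise incomparable; that is, $\pi(x_i)\cap\pi(x_j)\neq\emptyset$ and $\pi(x_i)\nsubseteq\pi(x_j)$ for all $i\neq j$. So the whole problem reduces to deciding, for each $n$, whether $S_n$ carries three elements whose prime supports are pairwise intersecting and pairwise incomparable. Since supports are determined by element orders, and element orders of $S_n$ are the least common multiples of partitions of $n$, this becomes a purely combinatorial question about which products of primes fit inside $n$ points.

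For the direction $n\le 7$ I would enumerate the realizable supports of $S_n$ through cycle types. For $n\le 4$ one has $|\pi(S_n)|\le 2$, and the proposition that an AT forces $|\pi(G)|\ge 3$ already forbids an AT. For $n=5,6$ the support set is $\{2,3,5\}$, but an element of order $10$ needs a disjoint $2$-cycle and $5$-cycle, hence seven points, so it does not exist; by the characterization of ATs for groups with $|\pi(G)|=3$ there is then no AT. The crucial case is $n=7$, where $|\pi(S_7)|=4$ and neither shortcut applies. Here I would observe that the only orders of $S_7$ supported on more than one prime are $6,12$ (support $\{2,3\}$) and $10$ (support $\{2,5\}$), since orders $14,15,21,\dots$ each require at least eight points. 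Thus the only multi-prime supports are $\{2,3\}$ and $\{2,5\}$, every other support being a singleton or empty, and a short check shows that the only pair of supports that is simultaneously intersecting and incomparable is $\{\{2,3\},\{2,5\}\}$, which cannot be extended to a third such support. Hence $\Gamma(S_7)$ has no AT.

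For the direction $n\ge 8$ I would exhibit an explicit AT using the primes $2,3,5$. For $n\ge 8$ the group $S_n$ contains elements of orders $6$ (a $2$-cycle and a $3$-cycle), $10$ (a $2$-cycle and a $5$-cycle) and $15$ (a $3$-cycle and a $5$-cycle, needing exactly eight points), together with elements of orders $2,3,5$ by Cauchy. Choosing $x,y,z$ of orders $6,15,10$ gives supports $\{2,3\},\{3,5\},\{2,5\}$, so $\{x,y,z\}$ is independent with pairwise incomparable supports, and the connecting paths are exactly those of the nilpotent construction, namely $x\sim w\sim u\sim y$, $y\sim u\sim v\sim z$ and $x\sim w\sim v\sim z$ where $o(u)=2,\ o(v)=3,\ o(w)=5$. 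Each path avoids the neighbourhood of the remaining vertex because its interior vertices have prime orders lying in the support of that vertex. This produces an AT, so $\Gamma(S_n)$ is not AT-free.

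The main obstacle is the case $n=7$: it is the unique value where $|\pi(S_n)|\ge 3$ yet $\Gamma(S_n)$ is still AT-free, so it escapes both the cardinality bound and the $|\pi(G)|=3$ characterization and must be settled by the finer cycle-length analysis, pinning down that the only two multi-prime orders available ($\{2,3\}$ and $\{2,5\}$) cannot be completed to an incomparable intersecting triple. The remaining cases are then routine verifications of which products of two primes fit inside $n$ moved points.
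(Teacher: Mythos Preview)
Your argument is correct and follows essentially the same route as the paper: both handle $n\le 4$ via the bound $|\pi(G)|\ge 3$, handle $n=5,6$ via the $|\pi(G)|=3$ characterization by noting that order $10$ (and $15$) is unavailable, handle $n=7$ by observing that the only multi-prime orders present are those with supports $\{2,3\}$ and $\{2,5\}$ (equivalently, that orders $14,15,21,35$ do not occur), and for $n\ge 8$ exhibit the same explicit triple of orders $6,15,10$ with the auxiliary prime-order elements providing the three avoiding paths. Your write-up of the $n=7$ case, via the reduction to three pairwise intersecting and pairwise incomparable supports, is in fact more explicit than the paper's, which simply asserts that the absence of elements of orders $14,15,21,35$ rules out an AT.
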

 \begin{proof}
 Since each element $x\in S_n$ we have a cycle decomposition, we can associate a partition representaion to each $x$, say $a_1^{r_1}a_2^{r_2}...a_k^{r_k}$ such that $a_1r_1+a_2r_2+...+a_kr_k=n$ and $o(x)=lcm(a_1,a_2,...a_k)$. 
 If $n=3$ and 4 then $\lvert\pi(S_n)\rvert=2$, hence by Proposition 3, $\Gamma(S_n)$ is AT-free. If $n=5$ and 6, then $\pi(S_n)=\{2,3,5\}$ and by Theorem 3.3, $\Gamma(S_n)$ is AT-free since $S_n$ has no elements of order 10 and 15. If $n=7$, eventhough $\pi(S_7)=\{2,3,5,7\}$, $S_7$ contains no elements of orders 15,14,21 and 35, so $\Gamma(S_7)$ contains no asteroidal triple.
 
 If $n\geq 8$, then the elements $x_1=(1\, 2)(3\, 4\, 5),\, x_2=(1\, 2\, 3)(4\, 5\, 6\, 7\, 8)$ and $x_2=(1\, 2)(3\, 4\, 5\, 6\, 7)$ constitute an asteroidal triple.
 \end{proof}
 \section{Conclusion and open problems}
We have completely characterized finite groups whose co-prime graphs are $C_4$-free, Claw-free, Cographs and Split graphs, and we have chararacterized nilpotent groups which are AT-free. We have been unable to solve the following problem.
\begin{problem}
Find a characterization for the finite groups whose co-prime graphs are AT-free. 
\end{problem}

The problem of finding asteroidal triples can be extended to other families of graphs defined on groups such as Power graphs, Enhanced power graphs, Commuting graphs, etc.

\section*{Acknowledgements}
The first author gratefully acknowledges the financial support of Council of Scientific and Industrial Research, India (CSIR) (Grant No-09/874(0029)/2018-EMR-I).
The authors would like to thank the DST, Government of India, for providing support to carry out the work under the scheme `FIST' (No.SR/FST /MS-I/2019/40).

\section*{Statements and Declarations}

\textbf{Conflict of interest} On behalf of all authors, the corresponding author states that there is no conflict of interest.





\bibliography{sn-bibliography}


\end{document}